\documentclass[journal]{IEEEtran}
\usepackage{amsmath,amssymb,amsfonts}

\interdisplaylinepenalty=2500

\usepackage{algorithm,algorithmic}
\usepackage{graphicx}
\usepackage{textcomp}

\usepackage{color}

\usepackage{lipsum}
\usepackage{epstopdf}

\usepackage{amsopn}

\usepackage{amsthm}

\usepackage{enumerate}
\usepackage{cancel}
\usepackage{mathrsfs}
\usepackage{mathdots}
\usepackage{euscript}
\usepackage{amscd}
\usepackage{cite}
\usepackage{placeins}
\usepackage{tikz}
\usetikzlibrary{snakes,arrows,shapes}

\graphicspath{{images/}}

\newtheorem{proposition}{Proposition}
\theoremstyle{definition}

\theoremstyle{definition}
\theoremstyle{remark}\newtheorem{remark}{Remark}

\newcommand{\bmat}{\begin{bmatrix}}
\newcommand{\emat}{\end{bmatrix}}

\DeclareMathOperator{\trace}{\rm tr}

\DeclareMathOperator{\rank}{rank}

\newcommand{\Rbb}{\mathbb R}

\newcommand{\Cbb}{\mathbb C}

\newcommand{\Sbb}{\mathbb S}
\newcommand{\Tbb}{\mathbb T}


\newcommand{\vb}{\mathbf  v}

\newcommand{\ub}{\mathbf  u}

\newcommand{\Jb}{\mathbf J}

\newcommand{\Ub}{\mathbf U}
\newcommand{\Vb}{\mathbf V}


\newcommand{\Sigmab}{\boldsymbol{\Sigma}}
\newcommand{\Lambdab}{\boldsymbol{\Lambda}}

\newcommand{\zerob}{\boldsymbol{0}}

\DeclareMathOperator{\range}{Range}
\DeclareMathOperator{\kernel}{Ker}

\newcommand{\Hfrak}{\mathfrak{H}}

\newcommand{\Lscr}{\mathscr{L}}

\newcommand{\Hcal}{\mathcal{H}}
\newcommand{\crm}{\mathrm{c}}

\hyphenation{op-tical net-works semi-conduc-tor}

\begin{document}

\title{On the Uniqueness Result of Theorem 6 in ``Relative Entropy and the Multivariable Multidimensional Moment Problem''}

\author{Bin~Zhu
\thanks{Manuscript received June 7, 2018; revised January 9, 2019.}
\thanks{This work was funded by the China Scholarship Council (CSC) under file No.~201506230140.}
\thanks{B. Zhu is with the Department of Information Engineering, University of Padova, Via Giovanni Gradenigo, 6b, 35131 Padova, Italy (email: \texttt{zhubin@dei.unipd.it}).}
\thanks{Copyright (c) 2017 IEEE. Personal use of this material is permitted. However, permission to use this material for any other purposes must be obtained from the IEEE by sending a request to \texttt{pubs-permissions@ieee.org}.}
}

\markboth{IEEE Transactions on Information Theory,~Vol.~00, No.~00, Mon.~YYYY}%
{Zhu: On Theorem 6 in ``Relative Entropy and the Multivariable Multidimensional Moment Problem''}

\maketitle

\begin{abstract}
Matrix-valued covariance extension and multivariate spectral estimation are formulated as generalized moment problems in the ``THREE'' approach and its extensions. Under this context, we discuss Theorem 6 in \cite{Georgiou-06} concerning the bijectivity of a moment map defined over a parametric family of spectral densities. In particular, we provide a counterexample in which the moment map under consideration is shown to have a critical point, namely a point at which the Jacobian loses rank. Then with standard techniques in bifurcation theory, we conclude further that the computed critical point is a bifurcation point, which means that the moment map is not injective.
\end{abstract}

\begin{IEEEkeywords}
Multivariate spectral estimation, parametrization of rational spectra, generalized moment problem, singular Jacobian, bifurcation point.
\end{IEEEkeywords}


\section{Introduction}


\IEEEPARstart{M}{ultivariate} spectral estimation is an important and challenging problem in the fields of system identification, modeling, and signal processing. The ``THREE'' framework for spectral estimation first appeared in the seminal paper \cite{BGL-THREE-00} by Byrnes, Georgiou, and Lindquist, which can be seen as a generalization of previous works on \emph{rational covariance extension} and \emph{Nevanlinna-Pick interpolation} (cf.~\cite{Kalman,Gthesis,Georgiou-87,Georgiou-87-NP,BLGM-95,byrnes1997partial,BGL98,georgiou1999interpolation,SIGEST-01,BGL-01} and references therein). Since then it has been significantly developed and extended to the multivariate case. We mention an incomplete list of contributions \cite{Georgiou-L-03,georgiou2005solution,PavonF-06,enqvist2008minimal,Z14rat,FRT-11} in the scalar case, and \cite{georgiou2002spectral,Georgiou-02,Georgiou-06,FPR-08,RFP-09,FPZ-10,avventi2011spectral,FMP-12,Z14,Z15,GL-17,baggio2018further,Zhu-Baggio-17,zhu2017parametric} for the multivariate counterpart. In that framework, the steady-state covariance matrix of the output process of a rational filter is used as data for the reconstruction of the input spectrum, which naturally admits a formulation as a generalized moment problem. Due to the typical ill-posedness of moment problems \cite{Grenander_Szego,KreinNudelman}, \emph{entropy}(-like) functionals are then exploited as optimization criteria to promote uniqueness of the solution. More specifically, one tries to find the input spectrum consistent with the output covariance matrix that maximizes some entropy or minimizes some distance index to an \emph{a priori} spectral density. A key feature of the approach is that parameter tuning is allowed in order to achieve high resolution in specified frequency bands.

Different choices of cost functionals lead to different forms of solutions, especially in the multivariate case (cf.~\cite{georgiou2002spectral,Georgiou-06,avventi2011spectral,FMP-12,Z14,Z15}). Among them \cite{Georgiou-06} is an important work utilizing the following relative entropy as the optimization criterion
\[\Sbb(\Phi|\Psi)=\int_\Tbb\trace\left[\Phi\left(\log\Phi-\log\Psi\right)\right]\]
which in turn, draws inspiration from quantum mechanics. Here $\Psi$ is the known prior and $\Tbb$ stands for the unit circle. Minimization of $\Sbb(\Phi|\Psi)$ with respect to $\Phi$ subject to the generalized moment constraint can be worked out explicitly leading to an exponential-type spectral density. Such a solution can also be recovered as a limit case of a family of solutions based on the multivariate Beta divergence discussed in \cite{Z14}. Difficulty arises in the other direction, namely minimization of $\Sbb(\Psi|\,\cdot)$ with respect to the second argument. As reported in \cite{Georgiou-06}, variational analysis and duality reasoning hit an obstruction in the middle because the functional dependence of the optimal primal on the dual variable cannot be described in a closed form (see also \cite{FPR-08}). As a response to this difficulty, Theorem~6 of \cite{Georgiou-06} suggests to ``forgo an explicit form for the entropy functional and start instead with a computable Jacobian''. In other words, a parametric form of the spectral density has been proposed, which possibly does not correspond to any cost functional. Although the statement of that theorem looks rather exciting, it is extremely nontrivial and its validity remains elusive as a rigorous proof is absent. In this note, we are motivated to address this issue. We shall only consider the first half of \cite[Theorem~6]{Georgiou-06} concerning \emph{rational} solutions to the spectral estimation problem.


The continuation argument is used extensively in the proofs of \cite{Georgiou-06} which follows the previous work \cite{georgiou2005solution} in the scalar case by the same author. As will be reviewed later in Section~\ref{sec:Bifurcat}, in order for the argument to be effective, the Jacobian of the parametric moment map is required to vanish nowhere in the feasible set, which is fulfilled when the prior is taken as $\Psi=\psi I$, namely a scalar spectral density function times the identity matrix. In this work, we show through a numerical example that the requirement of everywhere nonvanishing Jacobian is not met in general by the moment map in question when the prior is nontrivial, contrary to what is claimed in \cite[Section~IV]{Georgiou-06}. Furthermore, a critical point of the moment map is computed in the example and demonstrated to be a bifurcation point.
In consequence, the parametric solution to the spectral estimation problem considered in \cite[Section~IV]{Georgiou-06} is generally not unique.


This note is organized as follows. In Section \ref{sec:Problem}, we review the parametric form of the moment map introduced in \cite{Georgiou-06} that will be the central object of investigation in this work. We give a numerical example in Section \ref{sec:SingJac} where a critical point of the moment map is detected and computed. In Section \ref{sec:Bifurcat}, we apply a part of the bifurcation theory and carry out some further computation which allows us to conclude that the afore obtained critical point is in fact a bifurcation point. Finally, we make some remarks on an alternative parametrization of rational spectral densities.

\section{Problem Review}\label{sec:Problem}

One of the problems considered in \cite{Georgiou-06} is about finding a matrix spectral density function in a particular parametric family that satisfies a (generalized) moment constraint. In order to restate one of the main results of that paper, it is necessary to introduce some notations first:

\begin{itemize}
\item $G(z)=(zI-A)^{-1}B$ is a rational filter defined by the matrix pair $(A,B)$ such that $A\in\Cbb^{n\times n}$ is a stability matrix in the discrete-time sense, $B\in\Cbb^{n\times m}$ has full column rank, and $(A,B)$ is reachable.
\item $\Hfrak_{n}$ is the vector space of $n\times n$ Hermitian matrices over the reals; $\Hfrak_{+,n}\subset\Hfrak_{n}$ contains positive definite matrices.
\item $C(\Tbb;\Hfrak_m)$ is the vector space of continuous $\Hfrak_m$-valued function on the unit circle $\Tbb:=\{z\in\Cbb\,:\,|z|=1\}$. 
\item 
\begin{equation}\label{Gamma}
\Gamma:\,\Phi\mapsto\int G\Phi G^*
\end{equation}
is a linear operator from $C(\Tbb;\Hfrak_m)$ to $\Hfrak_{n}$, where the integral $\int F$ is a shorthand for $\int_{-\pi}^{\pi}F(e^{i\theta})\frac{d\theta}{2\pi}$
and $G^*(z) := B^* (z^{-1}I-A^*)^{-1}$.
The range of $\Gamma$, denoted by $\range\Gamma$, is a subspace of $\Hfrak_{n}$. The symbol used in \cite{Georgiou-06} for the same operator is $L$.
\item $\Lscr_+:=\{\Lambda\in\Hfrak_{n}\;:\;G^*(z)\Lambda G(z)>0,\ \forall z\in\Tbb\}$.
\item $\Psi$ is a bounded and coercive $m\times m$ spectral density function, that is, there exist real positive constants $\mu, M$ such that $\mu I\leq\Psi(e^{i\theta})\leq MI$ for all $\theta\in(-\pi,\pi]$. It admits a (unique) left outer factor $W_\Psi$, namely $\Psi=W_\Psi W_\Psi^*$. The notations used in \cite{Georgiou-06} for $\Psi$ and its factor are $\sigma$ and $\sigma^{1/2}$, respectively.
\item 
\begin{equation}\label{h_map}
h:\,\Lambda\mapsto\int GW_\Psi(G^*\Lambda G)^{-1}W_\Psi^*G^*
\end{equation}
is a map from $\Lscr_+^\Gamma:=\Lscr_+\cap\range\Gamma$ to $\range_+\Gamma:=\Hfrak_{+,n}\cap\range\Gamma$. The domain and codomain of the map are denoted with $\mathcal{K}_+^{\mathrm{dual}}$ and $\mathrm{int}(\mathcal{K})$ in \cite{Georgiou-06}, respectively. Moreover, the argument $\Lambda$ is lowercased in \cite{Georgiou-06}.
\end{itemize}

Theorem 6 of \cite{Georgiou-06} states that the map $h$ is a bijection given any bounded and coercive prior $\Psi$. In other words, given any positive definite matrix $\Sigma\in\range\Gamma$, there exists a unique parameter $\Lambda\in\Lscr_+^\Gamma$ such that the spectral density
\begin{equation}\label{solution_Georgiou}
\Phi=W_\Psi(G^*\Lambda G)^{-1}W_\Psi^*
\end{equation}
solves the generalized moment equation $\Gamma(\Phi)=\Sigma$. A key argument in that paper is that the Jacobian $\nabla h(\Lambda):\range\Gamma\to\range\Gamma$ is invertible for any $\Lambda\in\Lscr_+^\Gamma$. 
We will provide a two-dimensional ($m=2$) numerical counterexample in the next section to this argument showing that the Jacobian of $h$ can be singular at one point. 

\section{Singular Jacobian of the Moment Map}\label{sec:SingJac}

The Jacobian of the moment map $h$, i.e., its Fr\'{e}chet derivative, is a linear operator from $\range\Gamma$ to itself:
\begin{equation} \label{nabla_h}
\nabla h(\Lambda):\,\delta\Lambda \mapsto -\int G W_\Psi \Gamma^*(\Lambda)^{-1} \Gamma^*(\delta\Lambda) \Gamma^*(\Lambda)^{-1} W_\Psi^* G^*,
\end{equation}
where $\Gamma^*:X \mapsto G^*XG$ is the adjoint operator of $\Gamma$ in (\ref{Gamma}) from $\Hfrak_{n}$ to $C(\Tbb;\Hfrak_m)$, and $\Gamma^*(\Lambda)^{-1}$ is understood as $(G^*\Lambda G)^{-1}$.

As mentioned in the Introduction, the claim that $\nabla h(\Lambda)$ vanishes nowhere in $\Lscr_+^\Gamma$ is true in the special case when the prior $\Psi=\psi I$ with $\psi$ a scalar spectral density. Details can be found in \cite{Georgiou-06} itself; see also \cite{FPZ-10,zhu2017parametric}. An important observation is that the Jacobian in that case is a self-adjoint operator, and in fact, it is equal to the negative Hessian of a certain cost function. Therefore, the reasoning of nonvanishing Jacobian is built upon the definiteness of the quadratic form $\langle\delta\Lambda,\nabla h(\Lambda)(\delta\Lambda)\rangle$, where the standard inner product over $\Hfrak_{n}$ is defined as $\langle A,B \rangle:=\trace(AB)$. Such reasoning fails in general when $\Psi$ is arbitrarily (but fixed) matrix-valued because the self-adjoint property is lost. One can simply verify that the adjoint operator $\nabla h(\Lambda)^*:\range\Gamma \to \range\Gamma$ of the Jacobian (\ref{nabla_h}) is given by
\begin{equation*}
\delta\Lambda \mapsto -\int G \Gamma^*(\Lambda)^{-1} W_\Psi^* \Gamma^*(\delta\Lambda) W_\Psi \Gamma^*(\Lambda)^{-1} G^*,
\end{equation*}
which is different from $\nabla h(\Lambda)$.

In the sequel, we want to evaluate numerically the Jacobian determinant. Before that, we will have to build a matrix representation of the linear operator $\nabla h(\Lambda)$.

\subsection{Matrix Representation of the Jacobian}

The Jacobian (\ref{nabla_h}) is a linear map from a finite dimensional vector space to itself. It admits a matrix representation if we fix an orthonormal basis of $\range\Gamma$, say $\{\Lambdab_k\}_{k=1}^M$, where $M=m(2n-m)$ in the complex case (cf.~\cite[Proposition 3.1]{FPZ-12} for the dimension). More precisely, the $(j,k)$ element of the \emph{real} $M\times M$ Jacobian matrix $\Jb_h(\Lambda)$ is
\begin{equation}\label{Jac_mat_element}
\langle \Lambdab_j,\nabla h(\Lambda)(\Lambdab_k) \rangle.
\end{equation}
The domain of the map $h$, namely the set $\Lscr_+^\Gamma$, is convex, which is in particular path-connected. We have the next simple proposition.
\begin{proposition}
Consider a $C^1$ map $f:\,D\subset\Rbb^n\to\Rbb^n$ such that $D$ is path-connected. If its Jacobian $\nabla f:\,D\to\Rbb^{n\times n}$ is everywhere nonsingular, then its determinant $\det\nabla f(\cdot)$ does not change sign over $D$.
\end{proposition}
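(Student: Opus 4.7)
The plan is to deduce constancy of sign from continuity together with the intermediate value theorem along paths in $D$. Since $f$ is $C^1$, the entries of the Jacobian $\nabla f(x)$ are continuous functions of $x\in D$. The determinant is a polynomial in the matrix entries, so the scalar function $\varphi:=\det\nabla f:\,D\to\Rbb$ is continuous on $D$, and by the nonsingularity hypothesis it is nowhere zero.

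The argument then proceeds by contradiction. Suppose the sign of $\varphi$ is not constant, so that there exist $x_1,x_2\in D$ with $\varphi(x_1)>0$ and $\varphi(x_2)<0$. By path-connectedness, pick a continuous path $\gamma:[0,1]\to D$ with $\gamma(0)=x_1$ and $\gamma(1)=x_2$. Then $\varphi\circ\gamma:[0,1]\to\Rbb$ is continuous with $(\varphi\circ\gamma)(0)>0>(\varphi\circ\gamma)(1)$, and by the intermediate value theorem there exists $t^*\in(0,1)$ with $\varphi(\gamma(t^*))=0$. This contradicts the assumption that $\nabla f$ is everywhere nonsingular on $D$, and hence $\varphi$ must have constant sign on $D$.

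Equivalently, one could appeal directly to the topological fact that the continuous image of the connected set $D$ under $\varphi$ is a connected subset of $\Rbb$, i.e.\ an interval; an interval contained in $\Rbb\setminus\{0\}$ must lie either in $(0,\infty)$ or in $(-\infty,0)$. There is no genuine obstacle here: the only ingredients are the continuity of the determinant, path-connectedness of $D$, and the intermediate value theorem. The statement is of interest for the paper only because it is the topological backbone behind the ``continuation'' argument used later — once one exhibits a single point in $D$ where the Jacobian is singular, the proposition in its contrapositive form prevents any global degree-type reasoning that would otherwise force injectivity of $h$.
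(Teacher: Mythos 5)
Your argument is correct and matches the paper's proof essentially verbatim: contradiction via path-connectedness, continuity of $\det\nabla f\circ\gamma$, and the intermediate value theorem. The additional remark about connected image under a continuous map is a harmless repackaging of the same idea, so there is nothing further to compare.
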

\begin{proof}
Suppose the contrary, i.e., there exist two points $x_1,x_2\in D$ such that $\det\nabla f(x_1)>0$ and $\det\nabla f(x_2)<0$. By the assumption of path-connectedness, there exists a continuous function $p:\,[0,1]\to D$ such that $p(0)=x_1$ and $p(1)=x_2$. Since $f$ is $C^1$, the real-valued function $\det\nabla f(p(\cdot))$ is continuous. By the intermediate value theorem it must be zero for some $t\in(0,1)$.
\end{proof}

Therefore, if a sign change of the Jacobian determinant is detected, the Jacobian of the map under consideration cannot be everywhere nonsingular. This is the idea behind our numerical example.

\subsection{A Numerical Example}

Here we consider the problem of matrix covariance extension of dimension $m=2$ with the maximal covariance lag $p=1$, the (probably) simplest nontrivial case. We have $n=m(p+1)=4$. The matrix pair $(A,B)$ of the filter bank $G(z)$ is given by
\begin{equation*}\label{matrix_pair}
A=\begin{bmatrix}0&I_2\\0&0\end{bmatrix},\quad
B=\begin{bmatrix}0\\I_2\end{bmatrix},\quad\text{with}\quad G(z)=\begin{bmatrix}z^{-2}I_2\\z^{-1}I_2\end{bmatrix}.
\end{equation*}
Let us work in the real case, in which $\range\Gamma$ is the $M=7$-dimensional vector space of symmetric block-Toeplitz matrices of the form
\[\begin{bmatrix}
\Lambda_0 & \Lambda_1^\top\\
\Lambda_1 & \Lambda_0
\end{bmatrix},\]
where $\Lambda_0,\Lambda_1$ are $2 \times 2$ blocks. An orthogonal but unnormalized basis of $\range\Gamma$ can be determined from matrix pairs
\begin{equation}\label{basis_Range_Gamma}
\begin{split}
(\Lambda_0,\Lambda_1)\in\{\zerob\}\times
\left\{\begin{bmatrix}
1&0\\0&0
\end{bmatrix},
\begin{bmatrix}
0&1\\0&0
\end{bmatrix},
\begin{bmatrix}
0&0\\1&0
\end{bmatrix},
\begin{bmatrix}
0&0\\0&1
\end{bmatrix}\right\} \\
\bigcup \left\{\begin{bmatrix}
1&0\\0&0
\end{bmatrix},
\begin{bmatrix}
0&1\\1&0
\end{bmatrix},
\begin{bmatrix}
0&0\\0&1
\end{bmatrix}\right\}
\times \{\zerob\},
\end{split}
\end{equation}
where the bold symbol $\zerob$ denotes the $2\times2$ zero matrix. Normalization of the basis matrices is necessary to compute the quantity (\ref{Jac_mat_element}) correctly.


The prior is taken as $\Psi=KGG^*K^*$, a matrix Laurent polynomial, for
\begin{equation*}
K=\bmat  -0.22 &  -1.23  &  2.22   &   0 \\
   -1.11 &  -0.96 &   1.14  &  2.49 \emat.
\end{equation*}
The polynomial $zKG$ is Schur, with determinantal roots $0.5868,-0.3558$, and thus the outer factor $W_\Psi \equiv zKG$ in this example. 

The argument $\Lambda$ lives in the open set $\Lscr_+^\Gamma$. In practice, it is better to start with a factor of the form $zCG$ with $C\in\Cbb^{m \times n}$. Then we can form the function
\begin{equation}\label{C_factor_func}
G^* \Lambda G := G^* C^* C G.
\end{equation}
Notice that if we assign the elements of $C$ with Gaussian or uniformly distributed random numbers, it is unlikely that the polynomial $\det zCG$ has roots on the unit circle. From (\ref{C_factor_func}) we have the relation that $\Lambda$ is equal to the projection of $C^*C$ onto the subspace $\range\Gamma$. Details of the spectral factorization (\ref{C_factor_func}) can be found in \cite{FPZ-10,avventi2011spectral,zhu2017parametric}.

We have picked two $C$ matrices with corresponding $\Lambda$ matrices and the determinantal roots of $zCG$ reported below:

\begin{equation*}
C^{(0)}=\bmat -1.08 &  -0.57  &  2.45   &    0 \\
    0.84 &  -0.08  &  1.01  &  0.78 \emat
\end{equation*}
corresponds to the blocks of $\Lambda^{(0)}$
\begin{equation*}
\Lambda_0^{(0)} = \bmat 4.4473  &  0.6681 \\ 
    0.6681  &  0.4698 \emat,\ 
\Lambda_1^{(0)} = \bmat -1.7976 &  -1.4773 \\
    0.6552 &  -0.0624 \emat
\end{equation*}
with the roots of $\det zC^{(0)}G$ at $0.1211 \pm 0.5302i$ (modulus $0.5438$).
\begin{equation*}
C^{(1)}=\bmat 0.63  &  0.67  &  1.45  &    0 \\
    1.68 &  -0.61  &  1.04  &  2 \emat
\end{equation*}
corresponds to the blocks of $\Lambda^{(1)}$
\begin{equation*}
\Lambda_0^{(1)} = \bmat 3.2017 &  0.7387 \\ 
    0.7387  &  2.4105 \emat,\ 
\Lambda_1^{(1)} = \bmat 2.6607  &  0.3371 \\
    3.3600 &  -1.2200 \emat
\end{equation*}
with the roots of $\det zC^{(1)}G$ at $0.7791,-0.6683$.



The integral in (\ref{nabla_h}) is approximated with the Riemann sum in Matlab:
\[\int F(\theta)\approx\frac{\Delta\theta}{2\pi}\sum_k F(\theta_k),\]
where $\{\theta_k\}$ are equidistant points on the interval $(-\pi,\pi]$ and the ``step length'' $\Delta\theta=10^{-4}$. With the normalized basis obtained from (\ref{basis_Range_Gamma}), the Jacobian matrix can be computed explicitly as in (\ref{Jac_mat_element}) and its determinant can be evaluated. We have the numerical result $\det \Jb_h(\Lambda^{(k)})=10.6871, -326.6439$ for $k=0,1$, respectively.

Computation of the above example has also been implemented in Mathematica in order to evaluate the integrals symbolically given the numerical values of $\Lambda$. The result is consistent with the numerical computation in Matlab, i.e., a sign change of the Jacobian determinant has been detected. 

Further, the critical point $\Lambda^{\mathrm{c}}$ can be computed using the bisection method on the real-valued function $\det \Jb_h(\Lambda^{(t)})$ where
\begin{equation}\label{Lambda_path}
\Lambda^{(t)}=(1-t)\Lambda^{(0)}+t\Lambda^{(1)},\ t\in[0,1]
\end{equation}
is the line segment between $\Lambda^{(k)},\ k=0,1$. We have the blocks
\begin{equation*}
\Lambda^{\mathrm{c}}_{0} = \bmat 4.3901  &  0.6713 \\ 
    0.6713  &  0.5589 \emat,\ 
\Lambda^{\mathrm{c}}_{1} = \bmat -1.5930 &  -1.3940 \\
        0.7793 &  -0.1155 \emat,
\end{equation*}
with the corresponding $t^{\mathrm{c}}=0.0459$, $\det \Jb_h(\Lambda^{\mathrm{c}})=-5.4964\times10^{-14}$, and the two smallest singular values of $\Jb_h(\Lambda^{\mathrm{c}})$ are $1.1053\times10^{-16}$ and $0.0573$. Hence the Jacobian matrix of $h$ loses exactly rank $1$ at $\Lambda^{\mathrm{c}}$.

\section{Characterization of the Critical Point}\label{sec:Bifurcat}

The quest for nowhere vanishing Jacobian is motivated by the use of continuation methods to solve the nonlinear equation $h(\Lambda)=\Sigma$ for the parameter $\Lambda$. The idea is briefly reviewed in the next proposition when the map under consideration is a diffeomorphism (cf.~\cite{allgower1990continuation} for more general settings).

\begin{proposition}
Assume for simplicity that $D,E$ are open and convex subsets of $\Rbb^n$. Let $f:D \to E$ be a $C^{2}$ diffeomorphism. Then for $y \in E$, the solution $x=f^{-1}(y)$ can be found by solving the initial value problem
\begin{equation}\label{IVP}
\left \{
  \begin{aligned}
    \dot{x}(t) & = \left[\,\nabla f\left(x(t)\right)\,\right]^{-1} (y-y_0) \\
    x(0) & = x_0
  \end{aligned} \right.
\end{equation}
and evaluating $x:=x(1)$. The initial value $x_0 \in D$ is arbitrary and $y_0:=f(x_0)$.
\end{proposition}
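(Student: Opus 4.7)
The plan is to construct the solution of the initial value problem explicitly as the pullback under $f$ of the straight line segment joining $y_0$ to $y$ in $E$, and then invoke standard uniqueness of ODE solutions.

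First I would define the path $y(t) := y_0 + t(y - y_0)$ for $t \in [0,1]$. By convexity of $E$, this segment lies entirely inside $E$. Since $f: D \to E$ is a bijection, I can then set $x(t) := f^{-1}(y(t))$, which is a well-defined curve in $D$ with $x(0) = f^{-1}(y_0) = x_0$ and $x(1) = f^{-1}(y)$, the value we want to recover. The inverse function theorem, applied to the $C^2$ diffeomorphism $f$, guarantees that $f^{-1}$ is itself $C^2$, so $x(\cdot)$ is $C^2$ on $[0,1]$.

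Next I would differentiate the identity $f(x(t)) = y(t)$ to verify that this explicit curve satisfies the ODE. The chain rule yields
\[
\nabla f(x(t)) \, \dot{x}(t) \;=\; \dot{y}(t) \;=\; y - y_0,
\]
and since $\nabla f(x(t))$ is invertible everywhere on $D$ by the diffeomorphism hypothesis, left-multiplying by its inverse reproduces exactly the right-hand side of \eqref{IVP}. Combined with the initial condition, this shows that the explicit curve $x(t) = f^{-1}(y_0 + t(y-y_0))$ is a solution of \eqref{IVP} on the full interval $[0,1]$, and evaluating at $t=1$ gives the desired $x=f^{-1}(y)$.

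Finally I would close the argument with a uniqueness step. The right-hand side of \eqref{IVP}, viewed as a function of $x$, is $C^1$ on $D$: $\nabla f$ is $C^1$ because $f$ is $C^2$, and matrix inversion is smooth on the open set of invertible matrices. Hence it is locally Lipschitz in $x$, and the Picard--Lindel\"of theorem ensures that any solution of \eqref{IVP} defined on $[0,1]$ must coincide with the one constructed above. Honestly this proposition is a clean application of the chain rule plus ODE uniqueness, and I do not see a genuinely hard step; the one subtlety worth flagging is that convexity of $E$ is what keeps the homotopy $y(t)$ inside the domain of $f^{-1}$, and without it one would have to replace the straight line by some continuous path in $E$, which complicates the explicit formula but not the logic.
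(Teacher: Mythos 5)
Your proof is correct and takes essentially the same route as the paper: both pull back the straight line segment $p(t) = (1-t)y_0 + ty$ under $f^{-1}$ and verify by the chain rule that $x(t) = f^{-1}(p(t))$ satisfies the IVP, with convexity of $E$ ensuring the segment stays in range. The one thing you add that the paper leaves implicit is the explicit Picard--Lindel\"of uniqueness step, which justifies that \emph{any} numerical solution of the IVP necessarily coincides with the constructed curve; this is a worthwhile clarification, since the proposition's phrasing (``the solution ... can be found by solving'') quietly presumes that the IVP has no stray solutions, and your observation that the right-hand side is $C^1$ in $x$ pins that down cleanly.
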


\begin{proof}
By the assumption of convexity, the line segment
\[p(t)=ty+(1-t)y_0,\ t\in[0,1]\]
is inside $E$. It is easy to verify that the curve $x(t):=f^{-1}\left(p(t)\right)$ solves the IVP (\ref{IVP}). In fact, the differential equation comes from differentiating the two sides of $f\left(x(t)\right)=p(t)$ w.r.t.~$t$ and inverting the Jacobian $\nabla f\left(x(t)\right)$. Due to the assumption that $f$ is a diffeomorphism, the solution curve $x(t)$ is indeed continuously differentiable and the Jacobian of $f$ is everywhere invertible in $D$.
\end{proof}

The precise terminal point $x(1)$ can be obtained using a predictor-corrector algorithm \cite{allgower1990continuation,zhu2018wellposed}. If one is satisfied enough with an approximate solution, then a general-purpose ODE solver can be used to numerically integrate (\ref{IVP}). Of course, the map $f$ in the above proposition being a diffeomorphism is a sufficient condition for the continuation method to return a \emph{unique} solution. This is indeed the case for our $h$ map when the prior takes the special form $\Psi=\psi I$ as mentioned previously (cf.~\cite{zhu2017parametric}). However, in the presence of a singular Jacobian, it can happen that the solution curve to the IVP branches out at a critical point, and several terminal points exist. On the other hand, a numerical ODE solver diverges in that case because the norm of the derivative tends to infinity near the critical point.

Next we shall demonstrate numerically that the critical point computed in Section \ref{sec:SingJac} is a bifurcation point. To this end, it is customary to define the augmented map
\begin{equation*}
\Hcal(\Lambda,t):=h(\Lambda)-p(t)
\end{equation*}
from $\Lscr_+^\Gamma \times [0,1] \to \range\Gamma$, where $p(t):=h\left(\Lambda^{(t)}\right)$ is a smooth curve in $\range_+\Gamma$ with $\Lambda^{(t)}$ in (\ref{Lambda_path}). Under this convention, the curve $\left(\Lambda^{(t)},t\right)$ parametrized by $t$ is in the zero set $\Hcal^{-1}(0)$. When a basis of $\range\Gamma$ is fixed as in the previous section, the map $\Hcal$ can be identified as a function $H$ from a subset of $\Rbb^{M+1}$ to $\Rbb^M$, whose coordinates have the expression
\begin{equation}\label{H_coordinate}
H_j: (\Lambda,t) \mapsto \langle \Lambdab_j,\Hcal\left(\Lambda,t\right) \rangle, \quad j=1,\dots,M,
\end{equation}
where $\Lambda=\sum_k x_k\Lambdab_k$ with $x\in\Rbb^M$ the coordinate vector. Explicit calls of the coordinate $x$ will be avoided subsequently in order to ease the notation. 

The matrix representation of the augmented Jacobian $\Jb_\Hcal\in\Rbb^{M \times (M+1)}$ can be described in terms of the following vector with each entry in $\range\Gamma\,$:
\begin{equation*}
\nabla H(\Lambda,t)=\left[\begin{array}{ccc|c}
\nabla h(\Lambda)(\Lambdab_1) & \cdots & \nabla h(\Lambda)(\Lambdab_M) & -\dot{p}(t)
\end{array}\right],
\end{equation*}
where $\dot{p}(t)=\nabla h\left(\Lambda^{(t)}\right)\left(\Lambda^{(1)}-\Lambda^{(0)}\right)$. 
Then the $(j,k)$ element of the augmented Jacobian
\begin{equation*}
[\Jb_\Hcal(\Lambda,t)]_{jk}=\langle \Lambdab_j,[\nabla H(\Lambda,t)]_k \rangle.
\end{equation*}
Notice that the last column of $\Jb_\Hcal(\Lambda^\crm,t^\crm)$ does not increase the rank due to the relation $\Lambda^{(t^\crm)}=\Lambda^\crm$. Hence we have
\begin{equation*}
\rank \Jb_\Hcal(\Lambda^\crm,t^\crm)=M-1,\quad \dim\kernel \Jb_\Hcal(\Lambda^\crm,t^\crm)=2.
\end{equation*}

Let us introduce the Lyapunov-Schmidt reduction in our finite dimensional context:
\begin{equation*}
\begin{split}
 & \Rbb^{M+1}=D_1 \oplus D_2,\quad \Rbb^{M}=E_1 \oplus E_2,\textrm{ where} \\
 & D_1:=\kernel \Jb_\Hcal(\Lambda^\crm,t^\crm),\quad D_2:=D_1^\perp, \\
 & E_2:=\range \Jb_\Hcal(\Lambda^\crm,t^\crm),\quad E_1:=E_2^\perp.
\end{split}
\end{equation*}
The above subspaces can be made more precise by performing SVD to the Jacobian matrix of $H$  at $(\Lambda^\crm,t^\crm)$, namely
\begin{equation}\label{SVD_Jac_H}
\begin{split}
\Jb_\Hcal(\Lambda^\crm,t^\crm) & =\Ub \Sigmab \Vb^\top \\
 & =\bmat \ub_{1:M-1} & \ub_M \emat
 \bmat \Sigmab_{M-1} & \zerob \\
 \zerob & \zerob \emat
 \bmat \vb_{1:M-1}^\top \\ \vb_{M:M+1}^\top \emat \\
 & :=\bmat \Ub_1 & \Ub_2 \emat
  \bmat \Sigmab_{M-1} & \zerob \\
  \zerob & \zerob \emat
  \bmat \Vb_{1}^\top \\ \Vb_{2}^\top \emat,
\end{split}
\end{equation}
where $\Sigmab_{M-1}$ is the (square) diagonal matrix containing all the nonzero singular values, $\ub,\vb$ are columns of the orthogonal matrices $\Ub$ and $\Vb$, respectively, and the notation $\ub_{j:k}$ denotes the matrix obtained by putting together the columns $\ub_j,\ub_{j+1},\dots,\ub_k$. It is then elementary to verify that
\begin{equation*}
\begin{split}
 & D_1=\range \Vb_2,\quad D_2=\range \Vb_1, \\
 & E_2=\range \Ub_1,\quad E_1=\range \Ub_2.
\end{split}
\end{equation*}
We can then partition $H$ w.r.t.~the new bases determined by the singular vectors. Specifically, let us define
\begin{equation*}
\tilde{H}(y)=\bmat \tilde{H}_1(y_1,y_2) \\ \tilde{H}_2(y_1,y_2) \emat
:=\bmat \Ub_2^\top \\ \Ub_1^\top \emat H(\Vb_2 y_1 + \Vb_1 y_2),
\end{equation*}
where $y=(y_1,y_2) \in \Rbb^2 \times \Rbb^{M-1}$ are coordinates of the argument vector $(\Lambda,t)$ in (\ref{H_coordinate}) under the new basis. 
The Jacobian of $\tilde{H}$ is computed as
\begin{equation*}
\begin{split}
\nabla \tilde{H}(y) & =\bmat \nabla_1 \tilde{H}_1(y_1,y_2) & \nabla_2 \tilde{H}_1(y_1,y_2) \\ 
\nabla_1 \tilde{H}_2(y_1,y_2) & \nabla_2 \tilde{H}_2(y_1,y_2) \emat \\
 & =\bmat \Ub_2^\top \\ \Ub_1^\top \emat \nabla H(\Vb_2y_1+\Vb_1y_2) \bmat \Vb_2 & \Vb_1 \emat,
\end{split}
\end{equation*}
where $\nabla_k \tilde{H_j}$ denotes the Jacobian matrix of $\tilde{H}_j$ w.r.t.~the variable $y_k$. It is then straightforward to check that
\begin{equation*}
\nabla\tilde{H}(y^\crm)=\bmat \zerob & \zerob \\ \zerob & \Sigmab_{M-1} \emat,
\end{equation*}
where $y^\crm$ is the coordinate of $(\Lambda^\crm,t^\crm)$ and $\Sigmab_{M-1}$ nonsingular. Since we have $\tilde{H}_2(y_1^\crm,y_2^\crm)=0$, the implicit function theorem can be applied to assert that locally around $y^\crm$
\begin{equation*}
\tilde{H}_2(y_1,y_2)=0\ \Longleftrightarrow\  y_2=\varphi(y_1)
\end{equation*}
for some smooth function $\varphi$. Substituting $y_2$ with the above local functional dependence on $y_1$ into the equation $\tilde{H}_1(y_1,y_2)=0$, we obtain that equivalently,
\begin{equation*}
b(y_1):=\tilde{H}_1\left(y_1,\varphi(y_1)\right)=0,
\end{equation*}
which is called the \emph{bifurcation equation} at the critical point $y^\crm$ of $\tilde{H}$. Notice that $b$ is a real-valued function defined on some subset of $\Rbb^2$. According to \cite[Definition~8.1.11]{allgower1990continuation}, if the Hessian matrix $\nabla^2 b(y_1^\crm)$ has two eigenvalues of distinct signs, then $y^\crm$ is a \emph{simple} bifurcation point of the equation $\tilde{H}(y)=0$.

Following the derivation in \cite[pp.~77-78]{allgower1990continuation}, we have the equality
\begin{equation*}
\nabla^2 b(y_1^\crm)=\nabla_1^2 \tilde{H}_1(y^\crm).
\end{equation*}
We now need a computable expression for the Hessian matrix. Its operator form is easily obtained
\begin{equation*}
\begin{split}
\nabla_1^2 \tilde{H}_1(y):\  & (\delta y_{1,1},\delta y_{1,2}) \\
 & \mapsto \Ub_2^\top \nabla^2 H(\Vb_2y_1+\Vb_1y_2)(\Vb_2\delta y_{1,1},\Vb_2\delta y_{1,2})
\end{split}
\end{equation*}
as a bilinear map from $\Rbb^2\times\Rbb^2 \to \Rbb$, whose matrix representation follows immediately
\begin{equation}\label{Hessian_mat_b}
\nabla_1^2 \tilde{H}_1(y)= \Vb_2^\top \left[\sum_j u_{jM}\nabla^2 H_j(\Vb_2y_1+\Vb_1y_2)\right] \Vb_2,
\end{equation}
where $\Ub_2\equiv \ub_M$ is the last left singular vector in (\ref{SVD_Jac_H}), and $\nabla^2 H_j$ is the Hessian of the component function in (\ref{H_coordinate}).

Therefore, computation is ultimately reduced to evaluating the $3$-d array of second-order partials $\nabla^2 H$ under the standard (to be normalized) basis introduced in (\ref{basis_Range_Gamma}).
Define the symmetric matrix with $\range\Gamma$-valued entries $\nabla^2 H(\Lambda,t):=$
\begin{equation}\label{Hessian_array_H}
\left[\begin{array}{ccc|c}
\nabla^2 h(\Lambda)(\Lambdab_1,\Lambdab_1) & \cdots & \nabla^2 h(\Lambda)(\Lambdab_1,\Lambdab_M) & 0 \\
\vdots & \ddots & \vdots & \vdots \\
\nabla^2 h(\Lambda)(\Lambdab_M,\Lambdab_1) & \cdots & \nabla^2 h(\Lambda)(\Lambdab_M,\Lambdab_M) & 0 \\
\hline
0 & \cdots & 0 & -\ddot{p}(t)
\end{array}\right],
\end{equation}
where
\begin{equation*}
\nabla^2 h(\Lambda)(\delta\Lambda_1,\delta\Lambda_2)=\int F+F^*
\end{equation*}
is the second-order differential of $h$ with
\begin{equation*}
F:=G W_\Psi \Gamma^*(\Lambda)^{-1} \Gamma^*(\delta\Lambda_2) \Gamma^*(\Lambda)^{-1} \Gamma^*(\delta\Lambda_1) \Gamma^*(\Lambda)^{-1} W_\Psi^* G^*
\end{equation*}
and
\begin{equation*}
\ddot{p}(t)=\nabla^2 h(\Lambda^{(t)})(\Lambda^{(1)}-\Lambda^{(0)},\Lambda^{(1)}-\Lambda^{(0)}).
\end{equation*}
The Hessian matrix of the component function results from taking element-wise inner product with (\ref{Hessian_array_H}), i.e., 
\begin{equation*}
[\nabla^2 H_j(\Lambda,t)]_{k\ell} = \langle \Lambdab_j,[\nabla^2 H(\Lambda,t)]_{k\ell} \rangle,\ k,\ell=1,\dots,M+1.
\end{equation*}

Continuing our numerical example in the previous section, the Hessian matrix $\nabla^2 b(y_1^\crm)$ is computed according to the formula (\ref{Hessian_mat_b}) and its two eigenvalues are $-0.3226, 0.0239$. Therefore, we confirm that $y^\crm$, or equivalently $(\Lambda^\crm,t^\crm)$, is a bifurcation point. Following the very definition of a bifurcation point \cite[p.~76]{allgower1990continuation}, the original map $h$ in (\ref{h_map}) is not injective.

\begin{remark}
The sole purpose of the computation above is to show that the Hessian matrix $\nabla^2 b(y_1^\crm)$ is nonsingular, which according to \cite[p.~78]{allgower1990continuation} is generic. In this case, the Hessian cannot have two eigenvalues of the same sign, since otherwise $\left(\Lambda^{\mathrm{c}},t^{\mathrm{c}}\right)$ would be an isolated zero point of $\Hcal$ which cannot be reached through curve tracing. This is a consequence of a celebrated theorem of Morse \cite[Lemma~8.1.10]{allgower1990continuation}.
\end{remark}


\section{Concluding Remarks}

Although only nonvanishing Jacobian is emphasized in \cite{Georgiou-06}, properness\footnote{Recall that a map between two topological spaces is called proper if the preimage of every compact set in the codomain is compact in the domain.} is another important property of the moment map, as it is closely related to the question of surjectivity (cf.~\cite{Zhu-Baggio-17}).
The argument on properness has been made implicitly when the prior is taken to be $\Psi=I$, as can be seen in the second column of \cite[p.~1060]{Georgiou-06}, the part proving that the solution to the IVP can be ``continued'' until $t=1$. However, in the general case of a matrix-valued prior, a proof of the $h$ map being proper does not seem obvious.

It is also worth pointing out that the solution form (\ref{solution_Georgiou}) to the moment problem plays a major role in \cite{takyar2010analytic}, where the factor of $\Psi$ is taken as $W_\Psi=I+KG$ for some $K\in\Cbb^{m\times n}$, which is certainly matrix-valued, i.e., not scalar times identity.

At last, we wish to point out that the problem of real interest to us is how to parametrize (possibly) all rational solutions of ``minimal degree'' to the moment equation $\Gamma(\Phi)=\Sigma$ in the matrix case, since the scalar counterpart has been well solved in \cite{BLGM-95,BGL98,SIGEST-01} in the case of covariance extension. Out of such motive, we would like to mention an alternative parametrization of rational spectral densities discussed in \cite{FPZ-10,Zhu-Baggio-17,zhu2017parametric}, where the ``denominator'' $G^*\Lambda G$ is factored instead of breaking the prior down into factors as in (\ref{solution_Georgiou}). The moment map becomes 
\begin{equation}\label{tau_map}
\tau: C \mapsto \int G (CG)^{-1} \Psi (CG)^{-*} G^*,
\end{equation}
where the parameter $C$ determines the unique right  outer factor of $G^*\Lambda G$ as indicated in (\ref{C_factor_func}). It has been shown in \cite{Zhu-Baggio-17} that the map $\tau$ is \emph{surjective}. Moreover, the derivative of (\ref{tau_map}) can be written down explicitly \cite{zhu2017parametric}, and a singular Jacobian has so far not been detected numerically, which suggests that there is still hope for uniqueness in this alternative parametrization.


\section*{Acknowledgment}

The author would like to thank Dr.~Giacomo Baggio for implementing the numerical example in Mathematica.


\bibliographystyle{IEEEtran}
\bibliography{references}



\begin{IEEEbiography}{Bin Zhu} was born in Changshu, Jiangsu Province, China in 1991. He received the B.Eng.~degree from Xi'an Jiaotong University, Xi'an, China in 2012 and the M.Eng.~degree from Shanghai Jiao Tong University, Shanghai, China in 2015, both in control science and engineering. He is now a Ph.D. student at the Department of Information Engineering, University of Padova, Padova, Italy.

His current research interest includes spectral estimation, rational covariance extension, and ARMA modeling.
\end{IEEEbiography}

\end{document}